\documentclass[lineno]{biometrika}

\usepackage{amsfonts,amsmath}

\usepackage{times}
\usepackage{bm}
\usepackage{natbib}

\usepackage[plain,noend]{algorithm2e}

\makeatletter
\renewcommand{\algocf@captiontext}[2]{#1\algocf@typo. \AlCapFnt{}#2} 
\def\@algocf@capt@plain{top}
\renewcommand{\algocf@makecaption}[2]{%
  \addtolength{\hsize}{\algomargin}%
  \sbox\@tempboxa{\algocf@captiontext{#1}{#2}}%
  \ifdim\wd\@tempboxa >\hsize
    \hskip .5\algomargin%
    \parbox[t]{\hsize}{\algocf@captiontext{#1}{#2}}
  \else%
    \global\@minipagefalse%
    \hbox to\hsize{\box\@tempboxa}
  \fi%
  \addtolength{\hsize}{-\algomargin}%
}
\makeatother



\begin{document}

\jname{Biometrika}
\jyear{2016}
\jvol{103}
\jnum{1}
\accessdate{Advance Access publication on 31 July 2016}
\copyrightinfo{\Copyright\ 2016 Biometrika Trust\goodbreak {\em Printed in Great Britain}}

\received{January 2016}
\revised{March 2016}

\markboth{G. V. Chavez \and R. Kleeman}{Near-Gaussian densities and entropy}

\title{Near-Gaussian entropic functional calculation and density estimation using an asymptotic series}

\author{G. V. CHAVEZ}
\affil{B.A. candidate (Mathematics) at NYU College of Arts and Sciences, Courant Institute of Mathematical Sciences New York 10012, USA \email{gvc214@nyu.edu}} 

\author{\and R. KLEEMAN}
\affil{Department of Mathematics, Courant Institute of Mathematical Sciences New York 10012, USA  \email{kleeeman@cims.nyu.edu}}

\maketitle

\begin{abstract}
Near-Gaussian probability densities are common in many important physical applications. Here we develop an asymptotic expansion methodology for computing entropic functionals for such densities. The expansion proposed is a close relative of standard perturbation expansions in quantum field theory. We give novel results on the low-order effects of non-Gaussian even moments and asymmetry (e.g. skewness) on the entropy. The asymptotic expansion is also used to define a best fit maximum entropy density given a set of observed low order moments. The maximum entropy density estimation technique consists simply of the solution of a small set of algebraic equations and is therefore more straightforward numerically than classical maximum-entropy methods which rely on sophisticated convex optimization techniques.
\end{abstract}

\begin{keywords}
Gaussian, non-Gaussian, Higher-order moments, Entropy, Asymptotic series, Skewness
\end{keywords}

\section{Introduction}

The Gaussian assumption for probability densities is very widespread in many applications of mathematical statistics. This simply reflects the reality that many useful densities are often close to Gaussian and the assumption made facilitates practical calculation. Some important examples of this situation include the Kalman filter in data science and the densities often seen in turbulent fluid systems (see as representative examples the following work by the second author and co-workers, \cite{klmati02},
\cite{TCT15}, \cite{klee04d} and \cite{kleeman05a}).
 
Given this situation, it is natural to consider corrections to the standard Gaussian results using the framework of perturbation theory. This approach is actually the standard one followed in interacting quantum field theory (see e.g. \cite{schulman2012techniques}) and results in perturbative correction terms represented by Feynman diagrams. The method used is mathematically an asymptotic expansion using small parameters which control deviations from Gaussianity. 

In this contribution we explain the asymptotic expansion utilized and then use it to calculate two things which often present practical difficulties when general densities are considered: Entropic functionals and maximum entropy density estimates. In the first case, direct calculation methods require some kind of often arbitrary coarse graining of the random variable space (see e.g. \cite{kle05}) while in the second case intricate methods from convex optimization are often required (see e.g. \cite{Abr10}). Of course the present methodology, unlike earlier techniques, is restricted to near-Gaussian densities but given their ubiquity it is of clear utility.

As one might imagine there are many possible practical applications for the methodology to be presented here. In this paper we consider simple perturbatively non-Gaussian cases that give novel results on the impact of higher-order, non-Gaussian moments on the entropy. The application to more complex cases should however be conceptually clear and will be dealt with specifically in future work.  

In the next section we give a general outline of the proposed methodology. In succeeding sections we consider specific examples and clearly illustrate the method. In section 3, we demonstrate our method in the evaluation of a first order approximation of the moments and entropy of two near-Gaussian probability densities and show that the effects of asymmetry on the probability density cannot be seen at first order in the entropy. In section 4, we proceed to a second order approximation of the moments and entropy which reveals the low order effects of asymmetry which lead to an increase in the entropy. Section 5 illustrates the maximum entropy method while Section 6 contains a discussion and possible future work. 

\section{Method outline} 
Suppose we have a near-Gaussian probability density defined on $\mathbb{R}^{n}$
of the form
$$
p(\vec{x})=\frac{1}{Z}e^{ -q(\vec{x})-\varepsilon h(\vec{x})}
$$
where $\vec{x}\in \mathbb{R}^{n}$, $q(\vec{x})$ is a quadratic function, and $\varepsilon \ll 1$
is a perturbation parameter. For simplicity we consider for the present
only one such parameter but the generalization to many is clear.

If we expand $\exp(-\varepsilon h(\vec{x}))$ as a McClaurin series in $\varepsilon$
then we can write $Z$ as the following expectation
$$
Z=Z_{G} \int ... \int p_{G}(\vec{x})\left( 1-\varepsilon h(\vec{x})+\frac{\varepsilon^{2}}{2}h^{2}(\vec{x})-\ldots\right)dx_{1},...,dx_{n}$$
$$p_{G}(\vec{x}) = \frac{e^{-q(\vec{x})}}{Z_{G}}
$$
where $p_{G}(\vec{x})$ is the associated Gaussian density. Now if we further
assume that the function $h$ is a polynomial of some kind then we
can clearly write $Z$ as a power series in $\varepsilon$ with coefficients
which are Gaussian moments and thus known by Isserlis' theorem. Moments
with respect to $p$ may now be evaluated in a similar fashion but
using the already evaluated $Z$. Thus, for example, the first moment
is
$$
\mu_{(1)}=\frac{Z_{g}}{Z}\int ... \int  p_{G}(\vec{x})\left( 1-\varepsilon \vec{x}h(\vec{x})+\frac{\varepsilon^{2}\vec{x}h^{2}(\vec{x})}{2}-...\right)dx_{1}...dx_{n}$$
which can be evaluated again using Isserlis' theorem. Once the moments
are determined it is a straightforward matter to then evaluate entropic
functionals. For example, the Shannon entropy for the near-Gaussian density from above is 
$$
H=-\int ... \int p(\vec{x})\log p(\vec{x})dx_{1}...dx_{n}=\left\langle q\right\rangle _{p}+\varepsilon\left\langle h\right\rangle _{p}+\log Z
$$
which requires just two polynomial moments and $Z$.

Naturally for practical applications the various perturbation series
are truncated at some order and approximate quantities result. As
we shall see these series can be regarded as asymptotic expansions
which means that the errors of the approximations made can be bounded.
More detail on this may be found below. 

A conceptually similar methodology to the above is used widely in
quantum field theory although there one deals with the more technically
difficult situation of path integrals and $\mathbb{R}^{n}$ is extended to
an infinite dimensional vector space. Due to this infinite extension,
issues of divergence are common and require renormalization theory.
The various terms in the perturbation series there are commonly associated
with various Feynman diagrams.

Given that we have algebraic expressions involving the perturbation
expansion parameters for various moments of interest, one can reverse
the logic of the above derivation as follows: Prescribe a certain
set of $n$ moments which might arise from an experimental sample.
The associated maximum entropy density with such moments is given
by
\[
p(x)=\frac{1}{Z(\vec{\theta})}\exp\left(-\sum_{i=1}^{n}\theta_{i}r_{i}(x)\right)
\]
 where the $r_{i}$ are the polynomials associated with each moment.
The problem here then, as shown by \cite{mead84}, is to determine the coefficients $\theta_{i}$
through a Legendre transform of the moments . In the case that
these are small for $i>2$ we can use the above perturbation series
to derive algebraic equations for the coefficients in terms of the
prescribed moments. The solution gives then an approximation to the
maximum entropy density applicable to the prescribed moments. Of course
one then needs to check that this solution is self consistent with
the near-Gaussian assumption. 

\section{First-Order Approximations}

In this section we will demonstrate an approximation of the entropy of two, perturbatively non-Gaussian, probability density functions, using a first order approximation of the non-Gaussian moments. The first example will be a maximum-entropy density with an even, non-Gaussian, $p$th order moment. The second example have asymmetry about 0, in the form of a nonzero, odd, $q$th moment, where $q \geq 3$, as well as a non-Gaussian $p$th moment. We will show, using these two cases, that to study the contributions to the entropy from the asymmetry, a second order approximation is required.

\subsection{$p$th-Moment Perturbed Gaussian}
Consider a non-Gaussian probability density function of the form 
\begin{equation}
p(x)=\frac{C(\varepsilon)}{\sqrt{2 \pi}\sigma}e^{-\frac{x^{2}}{2 \sigma^{2}}}e^{-\varepsilon x^{p}}\end{equation}
where $p$ is an even positive integer, $C(\varepsilon)$ is a normalization function, and $\varepsilon$ is our non-Gaussianity parameter. This probability density is in the exponential family and hence is an entropy maximizing density, given a specified 1st (here zero), 2nd, and $p$th order non-Gaussian moment. 
\begin{theorem}
The moments of (1) may be written as the infinite series 
\begin{equation}
\mu_{(k)}=C(\varepsilon)\sum_{n=0}^{\infty} \frac{(-\varepsilon)^{n}}{n!} (np+k-1)!! \sigma^{np+k},\end{equation}
where 
\begin{equation}C(\varepsilon)=\left(\sum_{n=0}^{\infty} \frac{(-\varepsilon)^{n}}{n!} (np-1)!! \sigma^{np}\right)^{-1}, \end{equation}
and $!!$ indicates the double factorial, defined as $k!!=k(k-2)(k-4)(k-6)...1$.
\end{theorem}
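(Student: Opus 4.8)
The plan is to obtain (2) by inserting the Maclaurin expansion of $e^{-\varepsilon x^{p}}$ into the moment integral, integrating term by term, and recognizing the resulting coefficients as one-dimensional Gaussian moments; equation (3) then follows from the $k=0$ case of (2) together with the normalization condition $\int_{\mathbb{R}} p = 1$. First I would write
\[
\mu_{(k)} = \int_{\mathbb{R}} x^{k}\,p(x)\,dx = \frac{C(\varepsilon)}{\sqrt{2\pi}\,\sigma}\int_{\mathbb{R}} x^{k}\,e^{-x^{2}/(2\sigma^{2})}\,e^{-\varepsilon x^{p}}\,dx ,
\]
and substitute $e^{-\varepsilon x^{p}} = \sum_{n\ge 0}\frac{(-\varepsilon)^{n}}{n!}x^{np}$. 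Granting for the moment the interchange of sum and integral, this gives $\mu_{(k)} = C(\varepsilon)\sum_{n\ge 0}\frac{(-\varepsilon)^{n}}{n!}M_{np+k}$, where $M_{m} := \frac{1}{\sqrt{2\pi}\,\sigma}\int_{\mathbb{R}} x^{m}e^{-x^{2}/(2\sigma^{2})}\,dx$ is the $m$th moment of the centred Gaussian with variance $\sigma^{2}$. The standard univariate Gaussian moment identity (the special case of Isserlis' theorem already invoked in Section 2) gives $M_{m} = (m-1)!!\,\sigma^{m}$ for even $m$ and $M_{m}=0$ for odd $m$. Since $p$ is even, $np+k$ has the parity of $k$, so for even $k$ one reads off exactly the coefficient $(np+k-1)!!\,\sigma^{np+k}$ of (2) (for odd $k$ every term, hence every moment, vanishes, consistent with $p(x)=p(-x)$). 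Taking $k=0$ and using $C(\varepsilon)^{-1} = \frac{1}{\sqrt{2\pi}\,\sigma}\int_{\mathbb{R}} e^{-x^{2}/(2\sigma^{2})}e^{-\varepsilon x^{p}}\,dx$ with the same term-by-term integration yields (3); this is consistent with the $k=0$ instance of (2), namely $\mu_{(0)} = C(\varepsilon)\,C(\varepsilon)^{-1} = 1$.

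The step requiring care — and the only real obstacle — is the term-by-term integration, and here one must be honest about what ``may be written as'' means. For the density to be well defined one needs $\varepsilon \ge 0$, so that $e^{-\varepsilon x^{p}}\le 1$; I would state this hypothesis explicitly. When $p=2$ the series of absolute values $\sum_{n}\frac{\varepsilon^{n}}{n!}(np+k-1)!!\,\sigma^{np+k}$ converges for small $\varepsilon$ (it is essentially a Gaussian with rescaled variance), so Tonelli and Fubini legitimate the interchange and (2)--(3) hold as genuine identities. For $p>2$, however, comparing consecutive terms via Stirling shows their ratio tends to infinity, so the series diverges for every $\varepsilon>0$ and the interchange cannot be justified classically: (2) must be read as an asymptotic expansion as $\varepsilon\to 0^{+}$.

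To make that precise I would invoke Taylor's theorem for $t\mapsto e^{-t}$ on $t\ge 0$, in the form $e^{-t} = \sum_{n=0}^{N-1}\frac{(-t)^{n}}{n!} + R_{N}(t)$ with $\lvert R_{N}(t)\rvert\le t^{N}/N!$. Setting $t=\varepsilon x^{p}$, multiplying by $\lvert x\rvert^{k}e^{-x^{2}/(2\sigma^{2})}$ and integrating bounds the tail by $\frac{\varepsilon^{N}}{N!}\,M_{Np+k} = O(\varepsilon^{N})$, which is exactly the assertion that the right-hand side of (2) is the asymptotic series of $\mu_{(k)}$; the reciprocal series (3) is then obtained from the asymptotic expansion of $C(\varepsilon)^{-1}$ by the usual inversion of an asymptotic power series with nonzero leading term. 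Since this asymptotic reading is precisely the one anticipated in Section 2 (``these series can be regarded as asymptotic expansions''), I would present the formal manipulation first and append the Taylor-remainder estimate as the rigorous justification, flagging the $p=2$ case as the one where convergence and equality actually hold.
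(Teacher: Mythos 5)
Your proposal is correct and follows essentially the same route as the paper: expand $e^{-\varepsilon x^{p}}$ as a Maclaurin series, integrate term by term against the Gaussian, identify the coefficients via $\mu^{G}_{(m)}=(m-1)!!\,\sigma^{m}$, and obtain (3) from the $k=0$ normalization condition. The additional care you take over the term-by-term integration (divergence for $p>2$, the Taylor-remainder bound, the asymptotic reading) is exactly the material the paper defers to its supplementary materials and to Theorem 2, so it is a welcome but not divergent addition.
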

\begin{proof}
We use the Taylor expansion of the exponential function to write (1) as an infinite sum,
\begin{equation}
p(x)=\frac{C(\varepsilon)}{\sqrt{2\pi} \sigma} e^{-\frac{x^{2}}{2 \sigma^{2}}} \sum_{n=0}^{\infty} \frac{(-\varepsilon x^{p})^{n}}{n!}. \end{equation}
We will use this formula to compute the raw moments $\mu_{(k)}$ of (1). We write using (4)
$$\mu_{(k)}=\int_{-\infty}^{\infty} \frac{C(\varepsilon)}{\sqrt{2\pi} \sigma} e^{-\frac{x^{2}}{2 \sigma^{2}}}\sum_{n=0}^{\infty} \frac{(-\varepsilon)^{n}}{n!} x^{np+k}dx.$$
Hence rearranging and taking the integral, we see that 
\begin{equation}
\mu_{(k)}=C(\varepsilon)\sum_{n=0}^{\infty} \frac{(-\varepsilon)^{n}}{n!} \mu_{(np+k)}^{G},\end{equation}
where 
$$\mu_{(k)}^{G}=\int_{-\infty}^{\infty} \frac{1}{\sqrt{2 \pi} \sigma} e^{-\frac{x^{2}}{2 \sigma^{2}}}x^{k} dx$$
are the Gaussian central moments. 

Note that the raw moments in (5) also are dependent on the normalization function $C(\varepsilon)$. 
To compute the function $C(\varepsilon)$, we write 
\begin{equation}\int_{-\infty}^{\infty} p(x)dx=C(\varepsilon) \int_{\infty}^{\infty} \frac{1}{\sqrt{2\pi} \sigma} e^{-\frac{x^{2}}{2 \sigma^{2}}} e^{-\varepsilon x^{p}} dx=1.\end{equation}
We can then apply equation (4), and integrate to write 
\begin{equation}
1=C(\varepsilon) \left(1-\varepsilon \mu_{(p)}^{G}+\frac{\varepsilon^{2}}{2} \mu_{(2p)}^{G} -...\right)=C(\varepsilon)  \sum_{n=0}^{\infty} \frac{(-\varepsilon)^{n}}{n!} \mu_{(np)}^{G}. \end{equation}
We may write the normalization function $C(\varepsilon)$ in terms of the non-Gaussianity parameter $\varepsilon$ and the Gaussian central moments by rearranging (7). 

To write the series in (5) and (7) more explicitly, we note the expression for the even central moments of a Gaussian probability density function,
\begin{equation}
\mu_{(k)}^{G}=\sigma^{k}(k-1)!! \end{equation}
Substituting (8) into (5) gives the result in (2), and substituting (8) into (7) gives (3).
\end{proof}
The infinite series in (2) and (3) are proven to be asymptotic series in the supplementary matrials. We now have the necessary expressions for computation of the entropic functional. Now we consider the case of interest, the \textit{near-Gaussian} or \textit{perturbatively non-Gaussian} case, which is when the non-Gaussianity parameter is small relative to the Gaussian variance. That is, the case when 
\begin{equation} \varepsilon \sigma^{p} \ll 1. \end{equation}
In this case, we may truncate our expansions for $\mu_{(k)}$ and $C(\varepsilon)$ at some sufficiently low $n$. This gives a low-order approximation of the moments and normalization function. The supplementary materials prove the following theorem, which allows us to bound the error in our moment approximations.

\begin{theorem}
The series in (2) and (3) satisfy the following inequality.
\begin{eqnarray}
\left|\Delta_{k}(N)\right| & = & \left|\frac{\mu_{(k)}}{C(\varepsilon)}-\sum_{n=0}^{N} \frac{(-\varepsilon)^{n}}{n!}(np+k-1)!!\sigma^{np+k} \right|\nonumber\\
 & \leq & \frac{\varepsilon^{N+1}}{(N+1)!}(Np+k-1+p)!!\sigma^{(N+1)p+k}\label{eq:10}
\end{eqnarray}
where $\Delta_{k} (N)$ indicates the error in the $k$th asymptotic series when truncated at order $N$.
\end{theorem}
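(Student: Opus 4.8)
The plan is to go back to the integral that the series (2) encodes and apply Taylor's theorem with Lagrange remainder. Dividing (1) by $C(\varepsilon)$ and multiplying by $x^{k}$ before integrating, one has the exact identity
$$\frac{\mu_{(k)}}{C(\varepsilon)}=\int_{-\infty}^{\infty}\frac{1}{\sqrt{2\pi}\,\sigma}\,e^{-x^{2}/(2\sigma^{2})}\,x^{k}\,e^{-\varepsilon x^{p}}\,dx,$$
so I would first fix $x$ and expand $t\mapsto e^{-tx^{p}}$ about $t=0$ to order $N$, evaluated at $t=\varepsilon$. Since $\frac{d^{n}}{dt^{n}}e^{-tx^{p}}=(-x^{p})^{n}e^{-tx^{p}}$, this gives $e^{-\varepsilon x^{p}}=\sum_{n=0}^{N}\frac{(-\varepsilon x^{p})^{n}}{n!}+R_{N}(x)$ with Lagrange remainder $R_{N}(x)=\frac{(-\varepsilon)^{N+1}x^{p(N+1)}}{(N+1)!}\,e^{-\xi(x)\,x^{p}}$ for some $\xi(x)\in(0,\varepsilon)$.

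Next I would substitute this decomposition under the integral. The polynomial part integrates term by term to $\sum_{n=0}^{N}\frac{(-\varepsilon)^{n}}{n!}\mu_{(np+k)}^{G}$, which — assuming $k$ even, exactly as in the derivation of (2) from (5), so that every $np+k$ is even — equals $\sum_{n=0}^{N}\frac{(-\varepsilon)^{n}}{n!}(np+k-1)!!\,\sigma^{np+k}$ by (8). Hence $\Delta_{k}(N)$ is literally the integral of the remainder, and no interchange of sum and integral has to be justified separately. The key estimate is then pointwise: because $p$ is even we have $x^{p}\ge 0$, and because $\varepsilon>0$ we have $\xi(x)>0$, so $0<e^{-\xi(x)x^{p}}\le 1$ and therefore $|R_{N}(x)|\le\frac{\varepsilon^{N+1}x^{p(N+1)}}{(N+1)!}$.

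From this the bound drops out by moving the absolute value inside the integral:
$$|\Delta_{k}(N)|\le\frac{\varepsilon^{N+1}}{(N+1)!}\int_{-\infty}^{\infty}\frac{1}{\sqrt{2\pi}\,\sigma}\,e^{-x^{2}/(2\sigma^{2})}\,x^{k+p(N+1)}\,dx=\frac{\varepsilon^{N+1}}{(N+1)!}\,\mu_{(k+p(N+1))}^{G},$$
and one finishes with (8) and the identities $k+p(N+1)-1=Np+k-1+p$ and $k+p(N+1)=(N+1)p+k$. I do not expect a genuine obstacle: the whole argument is a textbook Taylor-remainder estimate once the partial sums are recognized as Gaussian integrals. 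The only points needing care are the parity bookkeeping that keeps (8) applicable at each step, and the observation that the remainder's exponential factor is bounded by $1$ precisely because $p$ is even and $\varepsilon>0$; this last fact is also why the truncation error never exceeds the size of the first omitted term, which is the defining feature of an asymptotic series.
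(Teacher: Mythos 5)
Your argument is correct: writing $\mu_{(k)}/C(\varepsilon)$ as the exact Gaussian integral, applying the Lagrange form of the Taylor remainder to $e^{-\varepsilon x^{p}}$, and using $0<e^{-\xi(x)x^{p}}\le 1$ (valid because $p$ is even and $\varepsilon>0$) yields precisely the stated bound, which is the magnitude of the first omitted term; your explicit restriction to even $k$ is also the right bookkeeping, since for odd $k$ every Gaussian moment in the sum vanishes and the theorem's double-factorial formula only makes sense for $np+k$ even. The paper relegates its own proof of this theorem to supplementary material not reproduced in the source, but this Taylor--remainder estimate is the standard route and matches what that supplement must do, so there is nothing to flag.
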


We can multiply both sides of (10) by $C(\varepsilon)$ and use the above inequality to get the following upper bound on our moment approximations' error.
\begin{eqnarray}
\left|\Delta \mu_{(k)}(N)\right| & = & \left|\mu_{(k)}-C(\varepsilon)\sum_{n=0}^{N} \frac{(-\varepsilon)^{n}}{n!}(np+k-1)!!\sigma^{np+k}\right|\nonumber\\
& \leq & C(\varepsilon)\frac{\varepsilon^{N+1}}{(N+1)!}(Np+k-1+p)!!\sigma^{(N+1)p+k}\label{eq:11}
\end{eqnarray}
where $\Delta \mu{(k)}(N)$ is the error in the asymptotic series' moment approximation when truncated at order $N$. We will now use Theorems 1 and 2, as well as equation (11) to prove a new result about the entropy.

\begin{theorem}
A 1st order approximation of (1)'s moments and normalization function using (2) and (3) gives the following 1st order result for (1)'s entropy.
\begin{equation}
\hspace{-.6cm}H_{p} \approx \log \sqrt{2\pi} \sigma \left(1-\varepsilon \sigma^{p}(p-1)!! \right) +\frac{1}{1-\varepsilon \sigma^{p} (p-1)!!} \left[\frac{1}{2}+\varepsilon \sigma^{p}\left((p-1)!!-\frac{1}{2} (p+1)!!\right) \right]
\end{equation}
and this approximation has the following upper bound on its error.
\begin{equation}
\left|\Delta H_{p}(1) \right| \leq \log\left(1+\beta_{p}\frac{\varepsilon^{2}}{2}(2p-1)!!\sigma^{2p} \right)+\beta_{p}\frac{\varepsilon^{2}}{4}(2p+1)!!\sigma^{2p}+\beta_{p} \frac{\varepsilon^{3}}{2}(3p-1)!!\sigma^{3p},
\end{equation}
where 
\begin{equation}
\beta_{p}=\left(1-\varepsilon (p-1)!!\sigma^{p}-\frac{\varepsilon^{2}}{2}(2p-1)!!\sigma^{2p}\right)^{-1}.
\end{equation}

\end{theorem}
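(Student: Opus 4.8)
\emph{Proof proposal.} The plan is to read off the approximation (13) by substituting the truncated series of Theorem~1 into the general entropy identity of Section~2, and then to bound the resulting error one contribution at a time using Theorem~2 and inequality (11). First I would specialise $H=\langle q\rangle_{p}+\varepsilon\langle h\rangle_{p}+\log Z$ to the density (1): here $q(x)=x^{2}/(2\sigma^{2})$, $h(x)=x^{p}$ and $Z=\sqrt{2\pi}\,\sigma/C(\varepsilon)$, so that
$$
H_{p}=\frac{\mu_{(2)}}{2\sigma^{2}}+\varepsilon\,\mu_{(p)}+\log\bigl(\sqrt{2\pi}\,\sigma\bigr)-\log C(\varepsilon).
$$
Into this I substitute the first-order ($N=1$) truncations of Theorem~1: $\mu_{(2)}\approx C(\varepsilon)\bigl(\sigma^{2}-\varepsilon(p+1)!!\,\sigma^{p+2}\bigr)$, $\mu_{(p)}\approx C(\varepsilon)\bigl((p-1)!!\,\sigma^{p}-\varepsilon(2p-1)!!\,\sigma^{2p}\bigr)$, and $1/C(\varepsilon)\approx 1-\varepsilon(p-1)!!\,\sigma^{p}$ from (3), the last turning $-\log C(\varepsilon)$ into $\log\bigl(1-\varepsilon\sigma^{p}(p-1)!!\bigr)$ and the common moment prefactor $C(\varepsilon)$ into $\bigl(1-\varepsilon\sigma^{p}(p-1)!!\bigr)^{-1}$. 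Collecting the $O(1)$ and $O(\varepsilon)$ pieces inside the square bracket and discarding the leftover term of size $\varepsilon^{2}\sigma^{2p}$ reproduces (13); this is routine algebra, and it can be checked against the direct expansion $H_{p}=\log(\sqrt{2\pi}\,\sigma)+\tfrac12-\tfrac{p}{2}\varepsilon\sigma^{p}(p-1)!!+O(\varepsilon^{2})$, whose constant is the Gaussian entropy and whose linear coefficient follows from differentiating the entropy under the integral at $\varepsilon=0$.

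For the error bound I would split $\Delta H_{p}(1)$ into the error in the $\log Z$ term, the error in $\mu_{(2)}/(2\sigma^{2})$, and the error in $\varepsilon\mu_{(p)}$. Theorem~2 at $k=0,\ N=1$ gives $\bigl|1/C(\varepsilon)-(1-\varepsilon(p-1)!!\sigma^{p})\bigr|\le\tfrac{\varepsilon^{2}}{2}(2p-1)!!\sigma^{2p}$, so $1/C(\varepsilon)\ge 1-\varepsilon(p-1)!!\sigma^{p}-\tfrac{\varepsilon^{2}}{2}(2p-1)!!\sigma^{2p}=\beta_{p}^{-1}>0$ under (9); since $\log$ is increasing and concave, the logarithm of a number lying within $\tfrac{\varepsilon^{2}}{2}(2p-1)!!\sigma^{2p}$ of $1-\varepsilon(p-1)!!\sigma^{p}$ and not below $\beta_{p}^{-1}$ differs from $\log(1-\varepsilon(p-1)!!\sigma^{p})$ by at most $\log\bigl(1+\beta_{p}\tfrac{\varepsilon^{2}}{2}(2p-1)!!\sigma^{2p}\bigr)$, which is the first term of (14); this is where $\beta_{p}$, the reciprocal of the guaranteed lower bound on $1/C(\varepsilon)$, enters, and the same bound yields $C(\varepsilon)\le\beta_{p}$. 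For the other two errors I would use (11): at $k=2,\ N=1$ it bounds $\bigl|\mu_{(2)}-C(\varepsilon)(\sigma^{2}-\varepsilon(p+1)!!\sigma^{p+2})\bigr|$ by $C(\varepsilon)\tfrac{\varepsilon^{2}}{2}(2p+1)!!\sigma^{2p+2}$, and at $k=p,\ N=1$ it bounds $\bigl|\mu_{(p)}-C(\varepsilon)((p-1)!!\sigma^{p}-\varepsilon(2p-1)!!\sigma^{2p})\bigr|$ by $C(\varepsilon)\tfrac{\varepsilon^{2}}{2}(3p-1)!!\sigma^{3p}$; replacing $C(\varepsilon)$ by $\beta_{p}$ and multiplying by $1/(2\sigma^{2})$ and by $\varepsilon$ respectively gives the second and third terms of (14), and the triangle inequality closes the estimate.

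The step I expect to be the main obstacle is keeping the normalisation bookkeeping honest. In (13) the factor $C(\varepsilon)$ has been replaced by $(1-\varepsilon\sigma^{p}(p-1)!!)^{-1}$ in two distinct roles — inside $\log Z$ and as the prefactor of both moments — and an $O(\varepsilon^{2}\sigma^{2p})$ remainder was discarded in forming (13), so a crude term-by-term application of the triangle inequality threatens to leave an extra $O(\varepsilon^{2}\sigma^{2p})$ contribution. Closing the argument requires organising the truncations so that these normalisation substitutions together with the discarded remainder regroup into exactly the three displayed terms, in particular exploiting that the $O(\varepsilon^{2}\sigma^{2p})$ error in $\log Z$ carries the opposite sign to the one coming from the $\mu_{(p)}$ term; I would treat pinning down that cancellation as the substantive content of the proof, the rest being mechanical.
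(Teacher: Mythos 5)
Your proposal follows essentially the same route as the paper's own proof: the same entropy identity (19), the same first-order truncations of $C(\varepsilon)$ and of $\mu_{(2)},\mu_{(p)}$, and the same three-term error decomposition with Theorem 2 applied at $k=0,2,p$, $N=1$, and with $C(\varepsilon)\le\beta_{p}$ obtained from the lower bound on $1/C(\varepsilon)$. The bookkeeping subtlety you flag at the end --- that the stated approximation replaces $C(\varepsilon)$ by $\bigl(1-\varepsilon(p-1)!!\sigma^{p}\bigr)^{-1}$ and discards an $O(\varepsilon^{2}\sigma^{2p})$ remainder not obviously absorbed by the three displayed error terms --- is a genuine looseness, but the paper's proof passes over it silently as well, so it is not a point of divergence between the two arguments.
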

\begin{proof}
We first prove the result in (12). A first order approximation for the function $C(\varepsilon)$ using (7) gives
\begin{equation}
C(\varepsilon) \approx \frac{1}{1- \varepsilon \mu_{(p)}^{G}}, \end{equation}
and hence a first order approximation for the raw moments using (5) and (15) gives,
\begin{equation}
\mu_{(k)} \approx  \frac{\mu_{(k)}^{G}-\varepsilon\mu_{(p+k)}^{G}}{1-\varepsilon \mu_{(p)}^{G}}.\end{equation}
We will use these first order approximations below for for the entropy. 

Recall that for a continuous probability density function $p(x)$, the differential entropy is defined
\begin{equation}
H=-\int_{-\infty}^{\infty} p(x) \log p(x) dx. \end{equation}
We substitute (1) into (17) to get (1)'s entropy. To begin, we calculate the negative logarithm of (1), which we write as
\begin{equation}
-\log p(x)=\frac{1}{2} \log \frac{2 \pi \sigma^{2}}{C^{2}(\varepsilon)}+\frac{x^{2}}{2 \sigma^{2}} +\varepsilon x^{p}. \end{equation}
Multiplying (18) by $p(x)$ and integrating gives us the expression for the entropy, 
\begin{equation}
H=\frac{1}{2} \log \frac{2 \pi \sigma^{2}}{C^{2}(\varepsilon)}+\frac{\mu_{(2)}}{2 \sigma^{2}} + \varepsilon \mu_{(p)}. \end{equation}
Applying (15) and (16) for $k=2$ and $k=p$, and substituting into (19) gives 
$$H \approx \frac{1}{2} \log 2 \pi \left[\sigma \left(1- \varepsilon \mu_{(p)}^{G}\right) \right]^{2} + \frac{1}{2 \sigma^{2}} \frac{\sigma^{2}-\varepsilon \mu_{(p+2)}^{G}}{1-\varepsilon \mu_{(p)}^{G}}+\varepsilon \frac{\mu_{(p)}^{G}-\varepsilon \mu_{(2p)}^{G}}{1-\varepsilon \mu_{(p)}^{G}},$$
and we can write this expression in the form 
\begin{equation}
H \approx \log \sqrt{2 \pi} \sigma \left(1- \varepsilon \mu_{(p)}^{G}\right)+ \frac{1}{1-\varepsilon \mu_{(p)}^{G}}\left(\frac{1}{2}-\varepsilon \frac{\mu_{(p+2)}^{G}}{2 \sigma^{2}}+\varepsilon \mu_{(p)}^{G}-\varepsilon^{2} \mu_{(2p)}^{G} \right).\end{equation}
We are making a first order approximation. So we neglect (20)'s terms that are quadratic in $\varepsilon$. This gives us
\begin{equation}
H \approx \log \sqrt{2 \pi} \sigma \left(1- \varepsilon \mu_{(p)}^{G}\right)+\frac{1}{1-\varepsilon \mu_{(p)}^{G}} \left(\frac{1}{2}+\varepsilon \mu_{(p)}^{G}-\frac{\varepsilon}{2 \sigma^{2}} \mu_{(p+2)}^{G} \right). \end{equation}
Substituting (8) for the Gaussian moments, simplifying, and grouping terms gives the result in (12).

To prove (13) and (14), we first recall the general expression for (1)'s entropy in (19), and subtract this from an entropy expression with the approximated moments. This gives, after some simplification,
\begin{equation}
\Delta H_{p} (1)=\log \left(1 \pm C(\varepsilon)\left|\Delta_{0}(1)\right| \right) \pm \frac{1}{2\sigma^{2}}\left|\Delta \mu_{(2)}(1)\right| \pm \varepsilon \left|\Delta \mu_{(p)}(1) \right|.
\end{equation}
The maximum value of (22) must have the form
\begin{equation}\log\left(1 + C(\varepsilon)\left|\Delta_{0}(1)\right| \right) + \frac{1}{2\sigma^{2}}\left|\Delta \mu_{(2)}(1)\right| + \varepsilon \left|\Delta \mu_{(p)}(1) \right|.\end{equation}
Next note that from (10) and (11), we may substite in for the errors in (23), and simplify to write the upper bound
\begin{equation}
\left|\Delta H_{p} (1) \right| \leq \log\left(1 + C(\varepsilon)\frac{\varepsilon^{2}}{2}(2p-1)!!\sigma^{2p} \right) + C(\varepsilon)\frac{\varepsilon^{2}}{4}(2p+1)!!\sigma^{2p} +C(\varepsilon)\frac{\varepsilon^{3}}{2}(3p-1)!!\sigma^{3p}.\end{equation}

Next, we show that (14)'s $\beta_{p}$ is an upper bound for $C(\varepsilon)$. To do this, we note from (10) that 
$$
\frac{1}{C(\varepsilon)}=1-\varepsilon (p-1)!! \sigma^{p} \pm \left|\Delta_{0}(1)\right|,$$
 and hence from the upper bound on $\left|\Delta_{0}(1)\right|$ in (10), the minimum value of $1/C(\varepsilon)$ is given by
\begin{equation}\frac{1}{C(\varepsilon)} \geq 1-\varepsilon (p-1)!! \sigma^{p}-\frac{\varepsilon^{2}}{2}(2p-1)!!\sigma^{2p}.\end{equation}
Rearranging (25)  gives $\beta_{p}$ as an upper bound for $C(\varepsilon)$. Substituting this into (24) completes the proof.
\end{proof}

Now let us consider (12) , (13), and (14) with $p=4$. This gives
\begin{equation}
H_{4} \approx \log \sqrt{2\pi} \sigma \left[1-3\varepsilon \sigma^{4} \right]+\frac{1}{2} \frac{1-9 \varepsilon \sigma^{4}}{1-3\varepsilon \sigma^{4}}, \end{equation}
with the error bound
\begin{equation}
\left|\Delta H_{4}(1) \right| \leq \log\left(1+\beta_{4} \frac{\varepsilon^{2}7!!}{2}\sigma^{8} \right)+\beta_{4} \frac{\varepsilon^{2}9!!}{4}\sigma^{8}+\beta_{4}\frac{\varepsilon^{3}11!!}{2}\sigma^{12},\end{equation}
and
\begin{equation}
\beta_{4}=\left(1-3\varepsilon \sigma^{p}-\frac{\varepsilon^{2}7!!}{2} \sigma^{8} \right)^{-1}. 
\end{equation}
These results are graphed in Fig. 1 for $\sigma=1$. Note that the entropy for the Gaussian component in (1) is easily shown to be
\begin{equation}
H^{G}=\log \sqrt{2\pi} \sigma+\frac{1}{2}. \end{equation}
By comparing (26) to (29), and noting (9), we can see that clearly the result in (26) is less than the entropy for the Gaussian component. Hence to first order, a 4th moment decreases the entropy result. Examination of (12) and comparison to (29) shows that this is true for general $p$. Hence the specification of a higher-order, even, non-Gaussian moment decreases the entropy from the corresponding Gaussian case at first order.

\subsection{Odd $q$th and Even $p$th-Moment Perturbed Gaussian}
Consider an asymmetric, non-Gaussian probability density function with the following form,
\begin{equation}
p(x)=\frac{C(\vec{\varepsilon})}{\sqrt{2 \pi} \sigma} e^{-\frac{x^{2}}{2 \sigma^{2}}} e^{\varepsilon_{q} x^{q}-\varepsilon_{p} x^{p}}, \end{equation}
where $p$ is even, $3 \leq q <p$ is odd, $\vec{\varepsilon}=(\varepsilon_{q},\varepsilon_{p})$, and $\varepsilon_{p}>0$, but $\varepsilon_{q}$ can be positive or negative. An $\varepsilon_{1}$ asymmetry parameter may always be removed with an appropriate translation (entropy is independent of the origin), so we neglect it here. So this is a non-Gaussian probability density function with asymmetry about 0 corresponding to the specification of a non-zero, odd $q$th moment as well as an even $p$th moment. 

One can use methods similar to those in Theorem 1 and the supplementary materials to show that the corresponding asymptotic series for the moments and normalization function have the form 
\begin{equation}
\mu_{(k)}=C(\vec{\varepsilon})\int_{-\infty}^{\infty}\frac{e^{-\frac{x^{2}}{2\sigma^{2}}}}{\sqrt{2 \pi}\sigma} x^{k}\sum_{n=0}^{\infty} \frac{1}{n!} \left(\varepsilon_{q}x^{q}-\varepsilon_{p}x^{p} \right)^{n}dx.
\end{equation}

We maintain a perturbative assumption, and for a first order approximation we neglect all terms with factors of the form $\varepsilon_{k}\varepsilon_{j}\sigma^{k+j}$.

\begin{theorem}
A first order approximation of (30)'s moments and normalization function using (31) gives an equivalent entropy result to (12).
\end{theorem}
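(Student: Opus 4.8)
The plan is to mirror the proof of Theorem~3 almost line for line, with $\varepsilon_p$ playing the role of the single perturbation parameter $\varepsilon$, and to verify that the asymmetry parameter $\varepsilon_q$ drops out entirely at first order. As in (18)--(19), taking minus the logarithm of (30) gives $-\log p(x)=\frac12\log\frac{2\pi\sigma^2}{C^2(\vec{\varepsilon})}+\frac{x^2}{2\sigma^2}-\varepsilon_q x^q+\varepsilon_p x^p$, so integrating $-p(x)\log p(x)$ produces
\[
H=\frac12\log\frac{2\pi\sigma^2}{C^2(\vec{\varepsilon})}+\frac{\mu_{(2)}}{2\sigma^2}-\varepsilon_q\mu_{(q)}+\varepsilon_p\mu_{(p)},
\]
so that only $C(\vec{\varepsilon})$, $\mu_{(2)}$, $\mu_{(q)}$ and $\mu_{(p)}$ need to be approximated.

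Next I would truncate (31) at first order. Expanding $(\varepsilon_q x^q-\varepsilon_p x^p)^n$, every monomial with $n\ge 2$ carries at least two of the small factors and is therefore of the neglected type $\varepsilon_k\varepsilon_j\sigma^{k+j}$; hence only $n=0,1$ survive and $\mu_{(k)}\approx C(\vec{\varepsilon})\bigl(\mu_{(k)}^{G}+\varepsilon_q\mu_{(k+q)}^{G}-\varepsilon_p\mu_{(k+p)}^{G}\bigr)$, while the choice $k=0$ gives $C(\vec{\varepsilon})^{-1}\approx 1+\varepsilon_q\mu_{(q)}^{G}-\varepsilon_p\mu_{(p)}^{G}$. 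The parity of the Gaussian moments now removes $\varepsilon_q$ from $C$, $\mu_{(2)}$ and $\mu_{(p)}$: since $q$ is odd and $p$ is even, the index $k+q$ is odd for every even $k$, so $\mu_{(q)}^{G}=\mu_{(q+2)}^{G}=\mu_{(q+p)}^{G}=0$. Consequently $C(\vec{\varepsilon})\approx(1-\varepsilon_p\mu_{(p)}^{G})^{-1}$ and $\mu_{(2)},\mu_{(p)}$ reduce exactly to (16) with $\varepsilon$ read as $\varepsilon_p$, in agreement with (15).

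Finally I would dispose of the one channel through which $\varepsilon_q$ enters $H$, namely $-\varepsilon_q\mu_{(q)}$. For the odd index $k=q$ the surviving terms are $\mu_{(q)}\approx C(\vec{\varepsilon})\bigl(\mu_{(q)}^{G}+\varepsilon_q\mu_{(2q)}^{G}-\varepsilon_p\mu_{(q+p)}^{G}\bigr)=C(\vec{\varepsilon})\,\varepsilon_q\sigma^{2q}(2q-1)!!$ by (8), since $\mu_{(q)}^{G}=\mu_{(q+p)}^{G}=0$; thus $\mu_{(q)}$ is itself of order $\varepsilon_q$, so $\varepsilon_q\mu_{(q)}$ carries the factor $\varepsilon_q\varepsilon_q\sigma^{2q}$ and must be discarded at first order. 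What remains of $H$ is then, term for term, the chain of approximations running from (19) through (21) with $\varepsilon_p$ in place of $\varepsilon$, and substituting (8) reproduces (12) (with $\varepsilon$ understood as $\varepsilon_p$). The only real subtlety is this order-counting with two parameters: one must recognise that, although $\varepsilon_q$ multiplies $x^q$ in the exponent of (30), the associated moment $\mu_{(q)}$ vanishes at zeroth order by symmetry, so the leading effect of the asymmetry on the entropy is genuinely second order --- exactly the conclusion this section is driving at, and the reason the second-order analysis of Section~4 is needed.
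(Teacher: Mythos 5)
Your proposal is correct and follows essentially the same route as the paper's own proof: truncate (31) at first order to obtain (32) and (33), substitute into the entropy expression (34), and observe that the $\varepsilon_q$ contribution survives only at quadratic order (because the odd Gaussian moments vanish and $\mu_{(q)}$ is itself $O(\varepsilon_q)$), so the result collapses to (21) and hence (12). Your explicit parity bookkeeping is a slightly more detailed presentation of the same cancellation the paper handles by ``grouping terms and dropping quadratic order.''
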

\begin{proof}
A first order approximation of $C(\vec{\varepsilon})$ using (31) with $k=0$ gives
\begin{equation}
C(\vec{\varepsilon}) \approx \frac{1}{1-\varepsilon_{p}\mu_{(p)}^{G}}. \end{equation}
Note that this is equivalent to (15). The first order approximation for the raw moments using (31) gives
\begin{equation}\mu_{(k)} \approx C(\vec{\varepsilon})  \left( \mu_{(k)}^{G}+ \varepsilon_{q} \mu_{(k+q)}^{G}-\varepsilon_{p}\mu_{(k+p)}^{G}\right).\end{equation}
We next substitute (30) into (17), which gives (30)'s entropy,
\begin{equation}
H=\frac{1}{2} \log \frac{2 \pi \sigma^{2}}{C^{2}(\vec{\varepsilon})}+\frac{ \mu_{(2)}}{2\sigma^{2}} - \varepsilon_{q} \mu_{(q)}+\varepsilon_{p} \mu_{(p)}. \end{equation}
Applying (33) for the moments in (34), and grouping terms gives
\begin{equation}
H \approx \log \frac{\sqrt{2 \pi} \sigma}{C(\vec{\varepsilon})}+C(\vec{\varepsilon}) \left(\frac{1}{2}+\varepsilon_{p}\mu_{(p)}^{G}-\frac{\varepsilon_{p}}{2\sigma^{2}}\mu_{(p+2)}^{G}-\varepsilon_{q}^{2}\mu_{(2q)}^{G}-\varepsilon_{p}^{2}\mu_{(2p)}^{G} \right).
\end{equation}
We drop all terms of quadratic order in $\vec{\varepsilon}$ from (35), and hence our first order entropy approximation becomes
\begin{equation}
H \approx \log \frac{\sqrt{2 \pi} \sigma}{C(\vec{\varepsilon})}+C(\vec{\varepsilon}) \left(\frac{1}{2}+\varepsilon_{p}\mu_{(p)}^{G}-\frac{\varepsilon_{p}}{2 \sigma^{2}} \mu_{(p+2)}\right)
\end{equation}
Substituting (32) for $C(\vec{\varepsilon})$, this gives the result in (21), which is shown in Theorem 3's proof to be equivalent to (12).
\end{proof}

This theorem shows that the non-Gaussian effects of asymmetry in the probability density are at least a 2nd order effect in the entropy. Hence to study its effects on the entropy we must proceed to higher order in our approximations.

\section{Second-Order Approximations}
In this section we study the effects of (30)'s asymmetry on its entropy. As shown in the previous section, this requires going beyond a first order approximation. Here we proceed to second order.

\subsection{Odd $q$th and Even $p$th-Moment Perturbed Gaussian continued...}

\begin{theorem}
A 2nd order approximation of (30)'s moments and normalization function using (31) gives the following 2nd order result for (30)'s entropy.
\begin{eqnarray}
H_{q,p} & \approx & \log\frac{\sqrt{2\pi}\sigma}{C_{2}(\vec{\varepsilon})}+C_{2}(\vec{\varepsilon})\left(\frac{1}{2}+\varepsilon_{p}\sigma^{p}\left((p-1)!!-\frac{(p+1)!!}{2}\right)\right)\nonumber \\
 &  & +\varepsilon_{q}^{2}\sigma^{2q}\left(\frac{(2q+1)!!}{2}-(2q-1)!!\right)+\varepsilon_{p}^{2}\sigma^{2p}\left(\frac{(2p+1)!!}{2}-(2p-1)!!\right)\label{eq:37}
\end{eqnarray}
where 
\begin{equation}
C_{2}(\vec{\varepsilon})=\left(1-(p-1)!!\varepsilon_{p}\sigma^{p}+\frac{\varepsilon_{q}^{2}}{2}(2q-1)!!\sigma^{2q}+\frac{\varepsilon_{p}^{2}}{2}(2p-1)!!\sigma^{2p} \right)^{-1}.
\end{equation}
This approximation has the following upper bound on its error.
\begin{equation}
\left|\Delta H_{q,p}(2) \right| \leq \log\left(1+\gamma_{q,p}(0,2)\right)+\gamma_{q,p}(2,2)+\gamma_{q,p}(q,2)+ \gamma_{q,p}(p,2),
\end{equation}
where
\begin{equation}
\gamma_{q,p}(0,2)=\beta_{q,p}\left(\frac{(3p-1)!!}{6} \varepsilon_{p}^{3}\sigma^{3p}+\frac{(2q+p-1)!!}{2}\varepsilon_{q}^{2} \varepsilon_{p} \sigma^{2q+p}\right),
\end{equation}

\begin{equation}
\gamma_{q,p}(2,2)=\beta_{q,p}\left(\frac{(3p+1)!!}{12}\varepsilon_{p}^{3}\sigma^{3p}+\frac{(2q+2p+1)!!}{4} \varepsilon_{q}^{2}\varepsilon_{p}\sigma^{2q+p} \right),
\end{equation}

\begin{equation}
\gamma_{q,p}(q,2)=\beta_{q,p}\left(\frac{(4q-1)!!}{6}\varepsilon_{q}^{4} \sigma^{4q}+\frac{(2q+2p-1)!!}{2}\varepsilon_{q}^{2}\varepsilon_{p}^{2} \sigma^{2q+2p} \right),
\end{equation}

\begin{equation}
\gamma_{q,p}(p,2)=\beta_{q,p}\left(\frac{(4p-1)!!}{6}\varepsilon_{p}^{4}\sigma^{4p}+\frac{(2q+2p-1)!!}{2} \varepsilon_{q}^{2} \varepsilon_{p}^{2} \sigma^{2q+2p}\right),
\end{equation}

and

\begin{equation}
\beta_{q,p}=\left(\frac{1}{C_{2}(\vec{\varepsilon})}-\frac{(3p-1)!!}{6} \varepsilon_{p}^{3}\sigma^{3p}-\frac{(2q+p-1)!!}{2}\varepsilon_{q}^{2} \varepsilon_{p} \sigma^{2q+p}\right)^{-1}.
\end{equation}

\end{theorem}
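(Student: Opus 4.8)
The plan is to follow the two-part template of the proof of Theorem~3, but now carrying every expansion to quadratic order in $\vec\varepsilon$ and systematically exploiting that all odd-order Gaussian moments vanish --- that vanishing is exactly what forces the asymmetry into second order.

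To obtain (37) I would truncate the series (31) at $n=2$. Writing out $(\varepsilon_q x^q-\varepsilon_p x^p)^n$ for $n=0,1,2$ and integrating term by term against the Gaussian weight gives
$$\mu_{(k)}\approx C(\vec\varepsilon)\Bigl(\mu^G_{(k)}+\varepsilon_q\mu^G_{(k+q)}-\varepsilon_p\mu^G_{(k+p)}+\tfrac12\varepsilon_q^2\mu^G_{(k+2q)}-\varepsilon_q\varepsilon_p\mu^G_{(k+q+p)}+\tfrac12\varepsilon_p^2\mu^G_{(k+2p)}\Bigr),$$
and the same formula with $k=0$ gives $1/C(\vec\varepsilon)\approx 1-\varepsilon_p\mu^G_{(p)}+\tfrac12\varepsilon_q^2\mu^G_{(2q)}+\tfrac12\varepsilon_p^2\mu^G_{(2p)}$, that is $C(\vec\varepsilon)\approx C_2(\vec\varepsilon)$ once (8) is substituted; here $\mu^G_{(q)}$ and $\mu^G_{(q+p)}$ dropped out because $q$ and $q+p$ are odd. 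I then substitute these into the exact entropy identity (34), namely $H=\log\frac{\sqrt{2\pi}\sigma}{C(\vec\varepsilon)}+\frac{\mu_{(2)}}{2\sigma^2}-\varepsilon_q\mu_{(q)}+\varepsilon_p\mu_{(p)}$. Parity again removes $\mu^G_{(q+2)}$ and $\mu^G_{(q+p+2)}$ from $\mu_{(2)}$ and collapses $-\varepsilon_q\mu_{(q)}$ to $-C_2\varepsilon_q^2\mu^G_{(2q)}$ up to terms of cubic order, while $\varepsilon_p\mu_{(p)}$ retains $C_2(\varepsilon_p\mu^G_{(p)}-\varepsilon_p^2\mu^G_{(2p)})$. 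Discarding everything of cubic or higher order in $\vec\varepsilon$, substituting (8) for the surviving Gaussian moments, and collecting the coefficients of $1$, $\varepsilon_p$, $\varepsilon_q^2$ and $\varepsilon_p^2$ yields (37), with $C_2(\vec\varepsilon)$ as in (38).

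For the error bound I would subtract $H_{q,p}$ from the exact $H$ of (34), exactly as in the passage from (22) to (24). The entropy error splits into four pieces: from replacing $C(\vec\varepsilon)$ by $C_2(\vec\varepsilon)$ in the logarithm, from the truncation of $\mu_{(2)}$, from that of $\varepsilon_q\mu_{(q)}$, and from that of $\varepsilon_p\mu_{(p)}$. Writing $1/C(\vec\varepsilon)=1/C_2(\vec\varepsilon)\pm|\Delta_0(2)|$ turns the first into $\log\bigl(1\pm C(\vec\varepsilon)|\Delta_0(2)|\bigr)$, and taking the all-$+$ worst case (legitimate because $C(\vec\varepsilon)$, $\sigma$, $\varepsilon_p$ and the surviving Gaussian moments are positive) bounds $|\Delta H_{q,p}(2)|$ by $\log\bigl(1+C(\vec\varepsilon)|\Delta_0(2)|\bigr)+\tfrac{1}{2\sigma^2}|\Delta\mu_{(2)}(2)|+\varepsilon_q|\Delta\mu_{(q)}(2)|+\varepsilon_p|\Delta\mu_{(p)}(2)|$. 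I would then invoke the two-parameter analogue of Theorem~2, proved in the supplementary materials by the same argument: the truncation error of the $k$-th series (31) at order $n=2$ is bounded in absolute value by the first omitted term, $\bigl|\tfrac16\int\frac{e^{-x^2/2\sigma^2}}{\sqrt{2\pi}\sigma}\,x^k(\varepsilon_q x^q-\varepsilon_p x^p)^3\,dx\bigr|$. Expanding the cube, discarding the odd-order Gaussian moments (those of order $3q+k$ and $q+2p+k$), bounding the rest with (8), and attaching the relevant entropy prefactor ($1$, $\tfrac{1}{2\sigma^2}$, $\varepsilon_q$, $\varepsilon_p$) reproduces the bracketed expressions in (40)--(43). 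Finally I would replace the unknown $C(\vec\varepsilon)$ by a computable constant: from $1/C(\vec\varepsilon)\ge 1/C_2(\vec\varepsilon)-|\Delta_0(2)|$ together with the bound on $|\Delta_0(2)|$ just obtained one gets $C(\vec\varepsilon)\le\beta_{q,p}$ as in (44), and substituting $\beta_{q,p}$ for each remaining $C(\vec\varepsilon)$ gives (39)--(43).

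The main obstacle is the error analysis rather than (37), and the delicate ingredient in it is bookkeeping. The parity count --- which Gaussian moments vanish at each stage --- must be exact both in (37) and in every $\gamma_{q,p}(k,2)$, since it is the sole reason $\varepsilon_q$ enters only through $\varepsilon_q^2$, and a single misplaced odd moment would spoil the result. One must also check that the genuinely cubic contributions discarded in forming the clean expression (37) --- for instance the $\varepsilon_q^2\varepsilon_p$ piece that comes out of $\varepsilon_q\mu_{(q)}$ and the $\varepsilon_p\varepsilon_q^2$ and $\varepsilon_p^3$ pieces out of $\varepsilon_p\mu_{(p)}$, together with the mismatch between using $C$ and $C_2$ in the $\mu_{(2)}$ term --- are in fact dominated by the $\gamma_{q,p}$ and leave no uncontrolled term of the same order; this partly relies on the (deliberately loose) double-factorial over-estimates built into (40)--(43). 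Establishing the two-parameter first-omitted-term estimate for an asymptotic, rather than convergent, series is the one genuinely technical step, which --- as in the one-parameter case --- I would carry out in the supplementary materials.
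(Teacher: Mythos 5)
Your proposal follows essentially the same route as the paper's own proof: a second-order truncation of the series (31), use of parity to kill the odd-order Gaussian moments (which is why $\varepsilon_q$ first appears at order $\varepsilon_q^2$), substitution into the exact entropy identity (34) with cubic terms discarded to obtain (37)--(38), and an error analysis that bounds each truncation by the first omitted ($n=3$) term and replaces $C(\vec{\varepsilon})$ by $\beta_{q,p}$ via the lower bound on $1/C(\vec{\varepsilon})$. The bookkeeping cautions you raise are apt but do not constitute a departure from the paper's argument.
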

\begin{proof}

We expand the series in (31) to second order. This gives 
$$
\mu_{(k)} \approx C(\vec{\varepsilon})\int_{-\infty}^{\infty}\frac{e^{-\frac{x^{2}}{2 \sigma^{2}}}}{\sqrt{2 \pi} \sigma}x^{k}\left[1+\varepsilon_{q}x^{q}-\varepsilon_{p}x^{p}+\frac{1}{2}\left(\varepsilon_{q}x^{q}-\varepsilon_{p}x^{p} \right)^{2} \right], $$
which can be simplified to 
\begin{equation}
\mu_{(k)} \approx C(\vec{\varepsilon})\int_{-\infty}^{\infty}\frac{e^{-\frac{x^{2}}{2 \sigma^{2}}}}{{\sqrt{2 \pi} \sigma}}x^{k}\left(1+\varepsilon_{q}x^{q}-\varepsilon_{p}x^{p}+\frac{1}{2} \varepsilon_{q}^{2} x^{2q}-\varepsilon_{q} \varepsilon_{p} x^{p+q}+\frac{1}{2} \varepsilon_{p}^{2} x^{2p} \right). \end{equation}
(45) with $k=0$ gives the equation for the second order approximate normalization function,
\begin{equation}
1 \approx C(\vec{\varepsilon})\left(1-\varepsilon_{p}\mu_{(p)}^{G}+\frac{1}{2} \varepsilon_{q}^{2} \mu_{(2q)}^{G}+\frac{1}{2} \varepsilon_{p}^{2} \mu_{(2p)}^{G} \right). \end{equation}
From (45), the second order approximation for the non-Gaussian moments is  
\begin{equation}
\mu_{(k)} \approx C(\vec{\varepsilon})\left(\mu_{(k)}^{G}-\varepsilon_{p}\mu_{(p+k)}^{G}+\frac{1}{2} \varepsilon_{q}^{2} \mu_{(2q+k)}^{G}+\frac{1}{2} \varepsilon_{p}^{2} \mu_{(2p+k)}^{G} \right) \end{equation}
for $k$ even, and it is 
\begin{equation}
\mu_{(k)} \approx C(\vec{\varepsilon})\left(\varepsilon_{q}\mu_{(q+k)}^{G}-\varepsilon_{q}\varepsilon_{p} \mu_{(p+q+k)}^{G} \right) \end{equation}
for $k$ odd.

Substituting (47) and (48) for the moments into (34), grouping terms, and neglecting all terms with factors of the form $\varepsilon_{j} \varepsilon_{k} \varepsilon_{l}$, gives the following result for the second order entropy approximation.
\begin{align}
H &\approx \log \frac{\sqrt{2 \pi}\sigma}{C(\vec{\varepsilon})}+C(\vec{\varepsilon})\times\nonumber\\
 &\left( \frac{1}{2}+\varepsilon_{p}\mu_{(p)}^{G} +\frac{1}{2\sigma^{2}}\left(\frac{\varepsilon_{q}^{2}}{2}\mu_{(2q+2)}^{G}+\frac{\varepsilon_{p}^{2}}{2}\mu_{(2p+2)}^{G}-\varepsilon_{p}\mu_{(p+2)}^{G} \right)-\varepsilon_{q}^{2}\mu_{(2q)}^{G}-\varepsilon_{p}^{2}\mu_{(2p)}^{G}\right)\label{eq:49}
\end{align}
Applying (46) to substitute for $C(\vec{\varepsilon})$ in (49), using (47) and (48) to substitute for the moments in (49), and grouping terms gives the results in (37)-(38).

We subtract (34) with the true moments from the same entropy expression with second order approximate moments. This gives after simplification
\begin{equation}
\Delta H_{q,p}(2)=\log \left(1 \pm C(\vec{\varepsilon})\left|\Delta_{0}(2)\right|\right) \pm \frac{1}{2 \sigma^{2}} \left|\Delta \mu_{(2)}(2) \right| \pm \varepsilon_{q} \left|\Delta \mu_{(q)}(2) \right| \pm \varepsilon_{p} \left|\Delta \mu_{(p)}(2) \right|. \end{equation}
The maximum value of (50) is given by
\begin{equation}
\left|\Delta H_{q,p}(2) \right| \leq \log \left(1 + C(\vec{\varepsilon})\left|\Delta_{0}(2)\right|\right) + \frac{1}{2 \sigma^{2}} \left|\Delta \mu_{(2)}(2) \right| + \varepsilon_{q} \left|\Delta \mu_{(q)}(2) \right| + \varepsilon_{p} \left|\Delta \mu_{(p)}(2) \right|
\end{equation}

We then note that given (31), we have the following upper bounds on the second order errors.
\begin{equation}
\left|\Delta_{0}(2) \right| \leq \int_{-\infty}^{\infty} \frac{e^{-\frac{x^{2}}{2\sigma^{2}}}}{\sqrt{2 \pi}\sigma} \frac{1}{3!}\left(\varepsilon_{q}x^{q}-\varepsilon_{p}x^{p}\right)^{3},
\end{equation}
and
\begin{equation}
\left|\Delta \mu_{(k)}(2) \right| \leq C(\vec{\varepsilon}) \int_{-\infty}^{\infty} \frac{e^{-\frac{x^{2}}{2\sigma^{2}}}}{\sqrt{2 \pi}\sigma} \frac{1}{3!}x^{k}\left(\varepsilon_{q}x^{q}-\varepsilon_{p}x^{p}\right)^{3}.
\end{equation}

Next we note that we can upper bound $C(\vec{\varepsilon})$ by noting that given a second order approximation, the minimum value of $1/C(\vec{\varepsilon})$ is given by

\begin{equation}
\frac{1}{C(\vec{\varepsilon})} \geq \frac{1}{C_{2}(\vec{\varepsilon})} -\left|\Delta_{0}(2)\right|.
\end{equation}

Simplifying and integrating (52), applying (8) for the Gaussian moments, substituting into (54), and rearranging gives $\beta_{q,p}$ as an upper bound for $C(\vec{\varepsilon})$. Doing the same simplification, integration, and application of (8) for (53) with $k=2$, $k=q$, and $k=p$, substituting $\beta_{q,p}$ for $C(\vec{\varepsilon})$, and substituting these upper bounds on the error into (51) gives the results in (39)-(44).

\end{proof}

Consider (37)-(38) with $q=3$ and $p=4$. Hence we are specifying non-Gaussian 3rd and 4th moments in (30). Note for this case from (47)-(48) that the \textit{skewness}, defined as $\mu_{(3)}/\mu_{(2)}^{3/2}$, is directly proportional to $\varepsilon_{3}$ at first order. Hence we refer to $\varepsilon_{3}$ as the skewness parameter. 

Now first let us set the skewness parameter $\varepsilon_{3}=0$ to see the second order effects that arise from the specification of a $4$th moment. This gives a second order approximation of the entropy of (1) with $p=4$. After simplification, (37) gives the result 
\begin{equation}
H \approx \log \sqrt{2 \pi} \sigma \left(1-3\varepsilon_{4} \sigma^{4}+52.5\varepsilon_{4}^{2} \sigma^{8}\right)+\frac{.5-4.5\varepsilon_{4}\sigma^{4}+367.5\varepsilon_{4}^{2}\sigma^{8}}{1-3\varepsilon_{4} \sigma^{4}+52.5\varepsilon_{4}^{2} \sigma^{8}}
\end{equation}
The first order entropy result in (26) and the second order entropy result in (55) are graphed above with their corresponding error bounds for $\sigma=1$ in Fig. 1. While there is a decrease in the entropy at first order, it is clear that there is a potentially large positive contribution to the entropy from the second order non-Gaussian effects. 

\begin{figure}
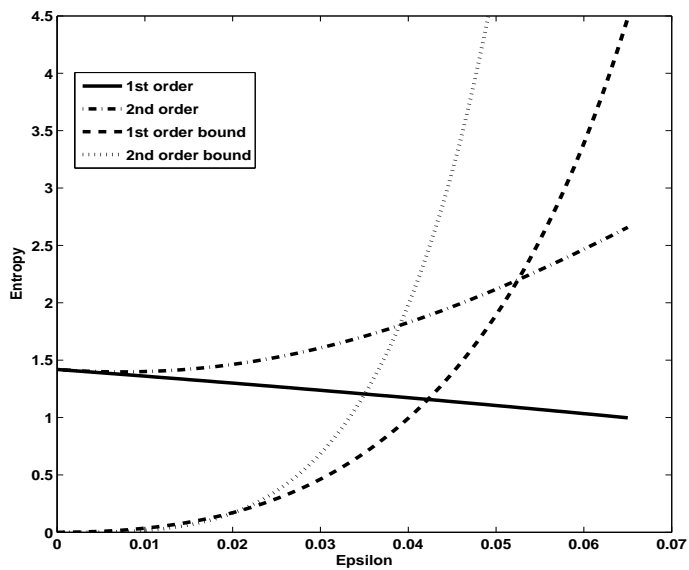


\figurebox{20pc}{25pc}{}[FigureCK.eps]
\caption{A graph of $H_{p}$ for $\varepsilon \in [0,.065]$, $p=4$, and $\sigma=1$. Solid is the 1st order approximation (26) with its corresponding error bound dashed. Dash-dotted is the 2nd order approximation (55) with corresponding error bound dotted.}
\label{fig:1}
\end{figure}

For nonzero skewness, $|\varepsilon_{3}| \geq 0$, and (37) with $q=3,p=4$ simplifies into 
\begin{equation}
H \approx \log \sqrt{2 \pi} \sigma \left(1-3\varepsilon_{4} \sigma^{4}+7.5\varepsilon_{3}^{2}\sigma^{6}+52.5\varepsilon_{4}^{2} \sigma^{8}\right)+\frac{.5-4.5\varepsilon_{4}\sigma^{4}+37.5\varepsilon_{3}^{2}\sigma^{6}+367.5\varepsilon_{4}^{2}\sigma^{8}}{1-3\varepsilon_{4} \sigma^{4}+7.5\varepsilon_{3}^{2}\sigma^{6}+52.5\varepsilon_{4}^{2} \sigma^{8}}.
\end{equation}
It is clear from (56) that this second order entropy approximation increases with the skewness parameter. Hence at second order, the skewness is increasing the entropy. Examination of the result in (37) shows that this is the case for general odd $q$. Hence in general, the lowest order effect of asymmetry on the entropy is to increase it. This could be foreseen by noting from (46)-(47) that the lowest order effect of $\varepsilon_{q}$ is to increase the variance and all even moments.

\section{Density Estimation}

Consider a univariate time-series $x_{t}$, $t \in \mathbb{Z}^{+}$, which is suspected to be near-Gaussian, and $M$ moment estimates of the form
\begin{equation}
\widehat{\mu}_{(m)}=\frac{1}{T}\sum_{t=1}^{T}\left(x_{t}\right)^{m}. \end{equation}
We suppose that our sample size $T$ is sufficiently large for the first $M$ moment estimates to be reasonably accurate. We use the empirical moments to estimate a corresponding, near-Gaussian, probability density function of the form
\begin{equation}
p(x)=\frac{C(\vec{\varepsilon})}{\sqrt{2 \pi} \sigma}e^{-\frac{x^{2}}{2 \sigma^{2}}}e^{\sum_{k \neq 2}^{M} \varepsilon_{k}x^{k}}, \end{equation}
where the $\varepsilon_{k}$ for even $k$ are negative and those for odd $k$ may be positive or negative. To estimate (58), we require estimates of the $M$ parameters $\sigma, \varepsilon_{1},\varepsilon_{3},...,\varepsilon_{M}$.  

\subsection{Procedure}
For the 1st step in our procedure, we choose a perturbative expansion of order $1$ with which to approximate (58), as 
\begin{equation}
p(x) \approx \frac{C(\vec{\varepsilon})}{\sqrt{2 \pi} \sigma}e^{-\frac{x^{2}}{2 \sigma^{2}}}\left(1+\sum_{k\neq 2}^{M} \varepsilon_{k}x^{k} \right). \end{equation}
(59) then gives an approximation of the normalization function $C(\vec{\varepsilon})$,
\begin{equation}
C_{(1)}(\vec{\varepsilon}) \approx \left(1+\sum_{k \neq 2}^{M} \varepsilon_{k} \mu_{(k)}^{G} \right)^{-1}.
\end{equation}
We then use (60) and set the empirically estimated moments in (57) equal to the corresponding perturbative approximations, as 
\begin{equation}
\widehat{\mu}_{(m)}=\frac{\mu_{(m)}^{G}+\sum_{k \neq 2}^{M} \varepsilon_{k} \mu_{(k+m)}^{G}}{1+\sum_{k \neq 2}^{M} \varepsilon_{k} \mu_{(k)}^{G}}=\frac{(m-1)!!\sigma^{m}+\sum_{k \neq 2}^{M} \varepsilon_{k} (k+m-1)!! \sigma^{k+m}}{1+\sum_{k \neq 2}^{M} \varepsilon_{k} (k-1)!! \sigma^{k}}. \end{equation}
Rearranging (61) gives a system of $M$ algebraic equations and $M$ unknowns. This system is then algebraically solved to give the resulting 1st order parameter estimates $\widehat{\sigma}^{(1)}, \widehat{\varepsilon}_{1}^{(1)},\widehat{\varepsilon}_{3}^{(1)},...,\widehat{\varepsilon}_{M}^{(1)}=\left[\widehat{\sigma}^{(1)},\vec{\varepsilon}^{\: (1)} \right]$. Note then that we must check that these estimates satisfy the perturbative hypothesis for all $k$,
\begin{equation}\widehat{\varepsilon}_{k} \widehat{\sigma}^{k} \ll1.\end{equation}
If (62) is not satisfied, then $x_{t}$ cannot be assumed to be near-Gaussian. Before proceeding, we note that based on the estimates $\left[\widehat{\sigma}^{(1)},\vec{\varepsilon}^{\: (1)} \right]$, there exists a set of integers $\left \{N^{(1)}(m) \right \}_{m=1}^{M}$ such that the upper bound on the error in the $m$th moment approximation, 
$$\left|\Delta \mu_{(m)}(N)\right| \leq C(\vec{\varepsilon})\int_{-\infty}^{\infty} \frac{e^{-\frac{x^{2}}{2 \widehat{\sigma}^{2}}}}{\sqrt{2 \pi} \widehat{\sigma}^{(1)}}\frac{x^{m}}{(N+1)!}\left(\sum_{k \neq 2}^{M}\widehat{\varepsilon}_{k}^{(1)}x^{k} \right)^{N+1}dx,$$
is minimized. These values of $\left \{N^{(1)}(m) \right \}_{m=1}^{M}$ can be found numerically or graphically, and for the $m$'s where $N^{(1)}(m) \geq 2$, we proceed to second order with our asymptotic series'.

The 2nd step in our procedure uses an expansion of the non-Gaussian component in (58) to order $2$, 
\begin{equation}
p(x) \approx \frac{C(\vec{\varepsilon})}{\sqrt{2 \pi} \sigma}\left(1+\sum_{k\neq 2}^{M} \varepsilon_{k}x^{k} + \frac{1}{2} \left(\sum_{k\neq 2}^{M} \varepsilon_{k}x^{k} \right)^{2} \right). \end{equation}
(63) gives a normalization function approximation $C_{(2)}(\vec{\varepsilon})$, and we use this to set our empirical moments in (57) equal to the 2nd order approximation,
\begin{equation}
\widehat{\mu}_{(m)}=C_{(2)}(\vec{\varepsilon}) \int_{-\infty}^{\infty} p_{G}(x) x^{m} \left(1+\sum_{k\neq 2}^{M} \varepsilon_{k}x^{k} + \frac{1}{2} \left(\sum_{k\neq 2}^{M} \varepsilon_{k}x^{k} \right)^{2} \right)dx, \end{equation}
where $p_{G}(x)=e^{-\frac{x^{2}}{2 \sigma^{2}}}/\sqrt{2 \pi}\sigma$. Integrating and rearranging (64) and using (8) gives another system of $M$ algebraic equations and $M$ unknowns, which is solved to get the second order parameter estimates $\widehat{\sigma}^{(2)}, \widehat{\varepsilon}_{1}^{(2)},\widehat{\varepsilon}_{3}^{(2)},...,\widehat{\varepsilon}_{M}^{(2)}=[\widehat{\sigma}^{(2)},\vec{\varepsilon}^{\: (2)}]$. We once again check the perturbative hypothesis in (62) with the second order estimates for all $k$. We then use the fact that the estimates $[\widehat{\sigma}^{(2)},\vec{\varepsilon}^{\: (2)}]$ imply a set of integers $\left \{N^{(2)}(m) \right \}_{m=1}^{M}$ such that the upper bounds on $\left|\Delta \mu_{(m)}(N) \right|$ are minimized. These values can be gotten numerically or graphically. For the $m$'s where $N^{(2)}(m) \geq 3$, we proceed to third order with our asymptotic expansions. 

These steps are repeated just as above until the step $n$ such that for all $m \in [1,M]$, $N^{(n)}(m) \leq n+1$. At this point, the order of maximum accuracy has been saturated for all the moment asymptotic expansions. The resulting estimates, denoted as $\widehat{\sigma}^{(*)}, \widehat{\varepsilon}_{1}^{(*)},\widehat{\varepsilon}_{3}^{(*)},...,\widehat{\varepsilon}_{M}^{(*)}=[\widehat{\sigma}^{(*)},\vec{\varepsilon}^{\: (*)}]$ are our final estimates for the background variance and non-Gaussianity parameters of the near-Gaussian density in (58).

\section{Discussion and Further Work}
We have outlined a perturbative technique for the calculation of entropic functionals for near-Gaussian densities. The method has been illustrated for several simple densities. These are of univariate maximum-entropy type in which certain moments of order greater than two are specified. The effect of these additional specified moments on the Shannon entropy are calculated at various orders in the developed perturbation expansion. In particular we show that to first order the entropy is decreased by the specification of a higher-order even moment. To second order, however, there is a possible increase. We also show that the effect of asymmetry (e.g. skewness) on the entropy is only apparent beyond first order, and at second order it increases the entropy.

In future work we plan a systematic analysis of general near-Gaussian maximum-entropy densities, both univariate and multivariate. We also develop general guidance for the optimal order of the perturbation expansion. 

As well as entropic functional calculation, we have also discussed a very inexpensive method for the calculation of the parameters of an approximating maximum entropy density given a set of presribed moments which are determined empirically. This was sketched in the previous section. In future work we intend to develop general expressions for the accuracy of the modelled moments at a given order of the perturbation expansion. We will also there apply the method to a practical application and compare our results to those obtained by a conventional convex optimization technique.

\bibliographystyle{biometrika}
\bibliography{refs,cv}

\end{document}